\font\bbbld=msbm10 scaled\magstephalf
\newcommand{\bfR}{\hbox{\bbbld R}}
\newcommand{\e}{\varepsilon}
\newcommand{\goto}{\rightarrow}
\newcommand{\be}{\begin{equation}}
\newcommand{\ee}{\end{equation}}
\newcommand{\bea}{\begin{eqnarray}}
\newcommand{\eea}{\end{eqnarray}}
\newtheorem{theorem}{Theorem}[section]
\newtheorem{lemma}[theorem]{Lemma}
\newtheorem{proposition}[theorem]{Proposition}
\newtheorem{corollary}[theorem]{Corollary}
\theoremstyle{definition}
\theoremstyle{remark}
\newtheorem{remark}[theorem]{Remark}
\numberwithin{equation}{section}
\begin{document}
\setlength{\baselineskip}{1.2\baselineskip}

\title
[A priori estimates for semistable solutions]
{A priori estimates for semistable solutions of semilinear elliptic equations }

\author{Xavier Cabr\'{e}}
\address{ICREA and Departament de Matem\`{a}tica Aplicada I,  Universitat Polit\`{e}cnica de Catalunya, 08028 Barcelona , Spain}
\email{xavier.cabre@upc.edu}
\author{Manel Sanch\'{o}n}
\address{Departament de Matem\`{a}tiques, Universitat Aut\`onoma de Barcelona, 08193 Bellaterra, Spain}
\email{msanchon@mat.uab.cat}
\author{Joel Spruck}
\address{Department of Mathematics, Johns Hopkins University,
 Baltimore, MD 21218}
\email{js@math.jhu.edu}
\thanks{Research of the first and second authors supported by MTM2011-27739-C04-01
(Spain) and 2009SGR345 (Catalunya). The second author is also supported by ERC grant 320501 
(ANGEOM project). Research of the third author supported in part by the NSF and Simons 
Foundation.}

%
%
%

\maketitle
\vspace {-.2in}
\begin{center}
{\em In memory of Rou-Huai Wang}
\end{center}

\section{Introduction}\label{sec1}

\setcounter{equation}{0}
In this note we consider semistable solutions of the boundary value problem
\begin{equation} \label{eq1.10}
\left\{
\begin{array}{rclll}
 Lu+f(u)&=&0&\textrm{in}&\Omega,
\\ 
u&=&0&\textrm{on}&\partial\Omega,
\end{array}
\right.
\end{equation}
where $\Omega \subset \bfR^n$ is a smooth bounded domain with $n\geq 2$, $f\in C^2$, and 
$Lu:=\partial_i(a^{ij}(x)u_j)$ is uniformly elliptic. More 
precisely, we assume that $(a^{ij}(x))$ is a symmetric $n\times n$ matrix with bounded
measurable coefficients, \textit{i.e.}, $a^{ij}=a^{ji}\in L^\infty(\Omega)$, for which
there exist positive constants $c_0$ and $C_0$ satisfying
\begin{equation}\label{uniform-ellipticity}
c_0|\xi|^2\leq a^{ij}(x)\xi^i\xi^j\leq C_0|\xi|^2\quad\textrm{for all }
\xi\in\mathbb{R}^n,\ x\in \Omega.
\end{equation}

By semistability of the solution $u$, we mean that the lowest Dirichlet eigenvalue of the 
linearized operator at $u$ is nonnegative. That is, we have the 
{\em semistability inequality}
\begin{equation} \label{eq1.20}
\int_{\Omega} f'(u)\eta^2~dx 
\leq 
\int_{\Omega} a^{ij}(x)\eta_i \eta_j~dx \quad\textrm{for all } \eta \in H^1_0(\Omega).
\end{equation}

There is a large literature on a priori estimates, beginning with the seminal 
paper of Crandall and Rabinowitz \cite{CR}. In \cite{CR} and subsequent works, 
a basic and  standard assumption is that $u$ is positive in $\Omega$ and 
$f\in C^2$ is positive, nondecreasing, and superlinear at infinity: 
\begin{equation} \label{eq1.30} 
f(0) > 0, \quad f'\geq 0 \quad\textrm{and}\quad\lim_{t\goto +\infty}\frac{f(t)}t=\infty.
\end{equation}

Note that, under these assumptions and with $f(u)$ replaced by $\lambda f(u)$ with $\lambda\geq 0$, 
semistable solutions do exist for an interval of parameters $\lambda\in(0,\lambda^*)$; see \cite{CR}.

In recent years there have been strong efforts to obtain a priori bounds under minimal 
assumptions on $f$ (essentially \eqref{eq1.30}), mainly after  
 Brezis and V\'azquez \cite{BV} raised several open questions.
The following are the main results in this direction.
The important paper of Nedev \cite{Nedev} obtains the $L^\infty$ bound for
$n=2$ and $3$ if $f$ satisfies \eqref{eq1.30} and in addition $f$ is convex. 
Nedev states his result for $L=\Delta$ but it is equally valid for general $L$. 
When $2\leq n \leq 4$ and $L=\Delta$, Cabr\'{e} \cite{Cabre} established that the 
$L^{\infty}$ bound holds for  {\em  arbitrary $f$} if in addition $\Omega$ convex.
Villegas \cite{V} replaced the condition that $\Omega$ is convex 
in Cabr\'{e}'s result assuming instead that $f$ is convex. For the radial case, 
Cabr\'e and Capella~\cite{CC} proved the $L^\infty$ bound when $n\leq 9$. On the other hand, 
it is well known that there exist unbounded semistable solutions when $n\geq 10$ 
(for instance, for the exponential nonlinearity $e^u$). 

For convex nonlinearities $f$ and under extra assumptions involving the two numbers
\begin{equation}\label{tau}
\tau_- := \liminf_{t \goto \infty} \frac{f(t)f''(t)}{f'(t)^2}
\leq
\tau_+ := \limsup_{t \goto \infty} \frac{f(t)f''(t)}{f'(t)^2}
\end{equation}
much more is known (see more detailed comments after Corollary \ref{cor}). For instance, 
Sanch\'on~\cite{Sanchon} proved that $u\in L^\infty(\Omega)$ whenever $\tau_-=\tau_+\geq 0$ and 
$n\leq 9$. This hypothesis is satisfied by $f(u)=e^u$, as well as by $f(u)=(1+u)^m$, $m>1$.

It is still an \textbf{open problem} to establish an $L^\infty$ estimate in general domains $\Omega$ when $n\leq 9$ 
under \eqref{eq1.30} as the only assumption on $f$. 

Our purpose here is to prove the following results:
\begin{theorem} \label{th1}  
Let $ f \in C^ 2$  be convex and satisfy \eqref{eq1.30}.  
Assume  in  addition that for every $\varepsilon > 0$, there exist 
$T = T (\varepsilon)$ and $C=C(\varepsilon) $ such that
\begin{equation} \label{eq1.40} 
f '(t) \leq C f(t)^{1+\varepsilon}\quad   \text{for all } t > T. 
\end{equation}
Then if $u$ is a positive semistable solution of \eqref{eq1.10}, we have 
$ f'(u)\in L^p(\Omega)$ for all $p<3$ and $n\geq 2$, while $f(u) \in L^p (\Omega)$  
for all  $p < \frac{n}{n-4}$ and $n\geq 6$. 

As a consequence, we deduce respectively:
\\
$(a)$ If $n \leq 5$, then $u\in L^{\infty}(\Omega)$.
\\
$(b)$ If $n \geq  6$,  then $ u
\in W^{1,p}_0(\Omega)$ for all $p < \frac{n}{n-5}$ and $u \in
L^p(\Omega)$ \text{for all} $p < \frac{n}{n-6}$. 
In particular, if $n \leq 9$ then $u\in H^1_0(\Omega)$.\end{theorem}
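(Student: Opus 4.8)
The plan is to exploit the semistability inequality \eqref{eq1.20} with a well-chosen test function built from $u$ and $f$, in the spirit of Nedev's argument but pushed further using the growth hypothesis \eqref{eq1.40}. The natural choice is $\eta = g(u)$ for a function $g$ with $g(0)=0$, which is admissible in $H^1_0(\Omega)$ since $u$ vanishes on $\p\Omega$. Testing the equation \eqref{eq1.10} against $h(u)$ and integrating by parts gives $\int_\Omega a^{ij}u_i u_j\, h'(u)\,dx = \int_\Omega f(u)h(u)\,dx$; this lets one convert the gradient term $\int a^{ij}g'(u)^2 u_i u_j$ appearing on the right of \eqref{eq1.20} (after choosing $\eta=g(u)$) into a zeroth-order integral $\int f(u)h(u)$, provided $h'=(g')^2$. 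The outcome is an inequality of the form $\int_\Omega f'(u)g(u)^2\,dx \le \int_\Omega f(u)h(u)\,dx$ with $h(t)=\int_0^t g'(s)^2\,ds$. Choosing $g$ so that $g(t)^2$ grows like $f(t)^{2\beta}/f'(t)$ and tracking the resulting power on the right-hand side, the convexity of $f$ (hence $f' \le f(t)f'(t)/(\text{something})$-type bounds, and $tf'(t)\ge f(t)-f(0)$) together with \eqref{eq1.40} will close a self-improving estimate yielding $\int_\Omega f(u)^{2\beta-1+o(1)}\,dx \le C$, and optimizing $\beta$ gives $f'(u)\in L^p$ for all $p<3$.

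Next I would promote this integral bound on $f(u)$ and $f'(u)$ to pointwise/Sobolev information on $u$ via elliptic regularity for $L$. Since $Lu = -f(u)$ with $f(u)\in L^p(\Omega)$, $p<n/(n-4)$ when $n\ge 6$, the Calderón–Zygmund estimates (valid for $L$ with the stated ellipticity, after the standard reduction) give $u\in W^{2,p}$, hence $u\in W^{1,q}_0$ with $q = np/(n-p)$, i.e.\ $q<n/(n-5)$, and then $u\in L^r$ with $r=nq/(n-q)<n/(n-6)$ by Sobolev embedding. For $n\le 5$ one has $f(u)\in L^p$ for some $p>n/2$ (indeed $f'(u)\in L^p$, $p<3$, and $f(u)\le C + Cf'(u)\cdot(\text{lower order})$, or more directly $f(u)\in L^p$ follows once $f'(u)$ is controlled because $f(t)\le f(0)+tf'(t)$ and one interpolates), so $u\in W^{2,p}\hookrightarrow L^\infty$, giving part $(a)$. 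For part $(b)$, $n\le 9$ forces $n/(n-5)>2$, so $u\in W^{1,2}_0=H^1_0(\Omega)$.

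The main obstacle is the first step: designing the test function $g$ and carrying out the bootstrap so that the exponent on the right-hand side is controlled by $\tau$-free quantities, using \emph{only} convexity and \eqref{eq1.40} rather than a two-sided bound on $f(t)f''(t)/f'(t)^2$. The role of \eqref{eq1.40} is precisely to absorb the $\e$-loss: each time one integrates by parts and uses $h(t)=\int_0^t (g')^2$, an extra factor of $f'$ or $f''$ appears that must be traded for $f^{1+\e}$, and one must check that the accumulated $\e$'s can be made arbitrarily small, which is why the conclusion is "for all $p<3$" and "for all $p<n/(n-4)$" rather than the endpoint. A secondary technical point is justifying all integrations by parts and the admissibility of $\eta=g(u)$ when $g$ is unbounded near $\sup u$: this is handled by first truncating $u$ at level $k$, proving the estimate with constants independent of $k$, and letting $k\to\infty$ by monotone convergence — a routine but necessary step given that a priori we do not know $u\in L^\infty$.
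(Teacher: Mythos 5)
Your sketch has the right skeleton (semistability with $\eta=g(u)$, converting gradient terms via the equation with $h'=(g')^2$, then elliptic regularity), but the two steps that actually carry the proof are left as ``obstacles'' rather than solved, and one attribution is off. First, the preliminary integral bounds
\[
\int_\Omega \tilde f(u)f'(u)\,dx\le C,\qquad \int_\Omega \tilde f(u)f''(u)\,a^{ij}u_iu_j\,dx\le C
\]
(Lemma~\ref{lem1}) are established in the paper using \emph{only} convexity and \eqref{eq1.30} --- hypothesis \eqref{eq1.40} plays no role there. The derivation is not a single power-choice $g^2\sim f^{2\beta}/f'$: it takes $\eta=\tilde f(u)$ in stability, combines it with the identities obtained by testing the equation against $f'(u)-f'(0)$ and against $\bigl(f'(u)-f'(M)\bigr)^+$, splits $\Omega$ into $\{u\le M\}$ and $\{u>M\}$, and exploits $f(t)>2f(0)+2$ and $f'(M)<f'(t)/2$ for $t$ large to close the estimate. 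None of that combinatorics is visible in your proposal; the ``self-improving estimate'' you describe would not, as written, terminate at $\int\tilde f f'\le C$.

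Second, the role of \eqref{eq1.40} in the paper is downstream: one multiplies the equation by $\dfrac{(f'(u)-f'(0))^{\alpha}}{1+f(u)}$ with $\alpha=\frac{3+\varepsilon}{1+\varepsilon}$ and, separately, by $(f'(u)-f'(0))^{\alpha}$ with $\alpha=1+\frac{1}{1+\varepsilon}$; hypothesis \eqref{eq1.40} lets one trade the resulting $f''$-weighted terms for the quantity bounded in Lemma~\ref{lem1}. From the second choice one gets $\int_\Omega \tilde f(u)^{\alpha+1}/u^{\alpha}\,dx\le C$ and then Nedev's domain-splitting iteration upgrades this to $f(u)\in L^p$ for $p<n/(n-4)$. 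Your proposal never names these test functions or the iteration, so the crucial exponents $3$ and $n/(n-4)$ are not actually derived. Finally, your route to part $(a)$ is wrong as stated: from $f'(u)\in L^p$ ($p<3$) and $u\in L^q$ ($q<n/(n-2)$) the bound $f(u)\le f(0)+uf'(u)$ and H\"older give $f(u)\in L^r$ only for $r$ well below $n/2$ when $n=5$, so the naive interpolation does not reach $W^{2,p}\hookrightarrow L^\infty$. The paper instead rewrites the equation as $Lu+c(x)u=-f(0)$ with $c=(f(u)-f(0))/u\le f'(u)$ (by convexity) and invokes Serrin's estimate (Proposition~\ref{lem2:1}(i), i.e.\ Corollary~\ref{cor2.4}), which needs only $f'(u)\in L^p$, $p>n/2$ --- precisely what is available.
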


Theorem \ref{th1} establishes the $L^\infty$ bound up to dimension $5$ when 
\eqref{eq1.40} holds. If we assume more about $f$ we can obtain an 
$L^{\infty}$ bound up to dimension $n=6$.

 \begin{theorem} \label{th2}    
Let $ f \in C^ 2$  be convex and satisfy \eqref{eq1.30}. Assume in addition that 
there exist $\varepsilon \in (0,1)$ and $T = T (\varepsilon) $ such that
\begin{equation}\label{H:tm2} 
f '(t) \leq C f(t)^{1-\varepsilon} \quad  \text{for all } t > T.
\end{equation}
Then  if $u$ is a positive semistable solution of \eqref{eq1.10}, we have $f'(u) 
\in L^\frac{3- \varepsilon}{1-\varepsilon}(\Omega)$ for all $n\geq 2$, while 
$ f (u) \in L^p (\Omega)$ for all $p < \frac{(1-\varepsilon)n}{(1-\varepsilon)n-4+2\varepsilon}$
and $n\geq  6+\frac{4\varepsilon}{1-\varepsilon}$.

As a consequence, we deduce respectively:
\\
$(a)$ If $n<6+\frac{4\varepsilon}{1-\varepsilon}$, then $u\in L^{\infty}(\Omega)$.
\\
$(b)$ If $n \geq  6+\frac{4\varepsilon}{1-\varepsilon}$,  
then $ u\in W^{1,p}_0(\Omega)$ for all $p < \frac{(1-\varepsilon)n}{(1-\varepsilon)n-5+3\varepsilon}$
and $u\in L^p(\Omega)$ for all $p < \frac{(1-\varepsilon)n}{(1-\varepsilon)n-6+4\varepsilon}$. 
In particular,  if $ n < 10+\frac{4\varepsilon}{1-\varepsilon}$ then $u\in H^1_0(\Omega)$.
\end{theorem}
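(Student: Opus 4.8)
The plan is to follow the by-now classical scheme for semistable solutions (Nedev, Cabr\'e, Sanch\'on), adapting the choice of test function to the exponent $\varepsilon$ in \eqref{H:tm2}; in fact Theorem~\ref{th2} is the exact analogue of Theorem~\ref{th1}, with the ``bad'' exponent $1+\varepsilon$ replaced by the ``good'' exponent $1-\varepsilon$, so the argument should run in parallel. I begin with two preliminaries. First, the standard a priori bound $\|f(u)\|_{L^1(\Omega)}+\|u\|_{L^1(\Omega)}\le C$ for positive solutions of \eqref{eq1.10} with $f$ superlinear: testing the equation against the first eigenfunction $\varphi_1>0$ of $-L$ gives $\lambda_1\int_\Omega u\varphi_1=\int_\Omega f(u)\varphi_1$, and \eqref{eq1.30} forces $\int_\Omega u\varphi_1\le C$, hence $\int_\Omega f(u)\varphi_1\le C$; a comparison argument near $\partial\Omega$ then upgrades this to the unweighted $L^1$ bound. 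Second, I extract the two elementary consequences of \eqref{H:tm2} to be used throughout: integrating $f'(t)f(t)^{-(1-\varepsilon)}\le C$ gives the polynomial growth $f(t)\le C(1+t)^{1/\varepsilon}$, and convexity of $f$ (through $f(2t)-f(t)\le t f'(2t)$ together with $f(2t)-f(t)\ge c\,f(2t)$ for $t$ large) gives $f(t)\le C\bigl(1+t f'(t)\bigr)$.

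\emph{The fundamental inequality.} For a $C^1$ function $c$ with $c(0)=0$ and $c,c'$ of admissible growth, I put $\eta=c(u)\in H^1_0(\Omega)$ in the semistability inequality \eqref{eq1.20} to get $\int_\Omega f'(u)c(u)^2\le\int_\Omega a^{ij}c'(u)^2 u_i u_j$. Multiplying \eqref{eq1.10} by the primitive $H(u)$ with $H'=(c')^2$ and integrating by parts gives $\int_\Omega a^{ij}c'(u)^2 u_i u_j=\int_\Omega f(u)H(u)$. Combining yields the master inequality
\[
\int_\Omega f'(u)\,c(u)^2\,dx\ \le\ \int_\Omega f(u)\,H(u)\,dx,\qquad H(t):=\int_0^t c'(s)^2\,ds .
\]

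\emph{Higher integrability of $f'(u)$ — the heart of the matter.} Next I choose $c$ adapted to \eqref{H:tm2}: essentially $c$ equal to a power of $f$ (or a primitive of a power of $f'$) for $t$ large, smoothly cut off near $0$ so that $H$ remains bounded on bounded subsets of $[0,\infty)$. Using convexity to integrate $H$ by parts, and using \eqref{H:tm2} to trade one factor $f'$ for $f^{1-\varepsilon}$, the right-hand side $\int_\Omega f(u)H(u)$ gets bounded by lower order quantities (the $L^1$ bound above) plus a term of strictly smaller homogeneity than the left-hand side, which can then be absorbed on $\{u>R\}$ for $R$ large. Iterating — each pass improving the exponent by an amount governed by $\varepsilon$ — should lead to $f'(u)\in L^{(3-\varepsilon)/(1-\varepsilon)}(\Omega)$, and a parallel bookkeeping, coupled with the Sobolev inequality applied to $f(u)^{\alpha}$, should give $f(u)\in L^p(\Omega)$ for every $p<\frac{(1-\varepsilon)n}{(1-\varepsilon)n-4+2\varepsilon}$ when $n\ge 6+\frac{4\varepsilon}{1-\varepsilon}$. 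I expect this to be the main obstacle: a naive choice of $c$ produces a scale-invariant identity that yields no bound at all, so the whole point is to calibrate the exponents — genuinely exploiting \eqref{H:tm2}, not mere convexity — so that the estimate actually closes, while keeping the cut-off near the origin compatible with the iteration.

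\emph{Consequences.} Finally I feed the integrability of $f(u)$ into $-Lu=f(u)$ and run $W^{2,q}$ elliptic estimates together with Sobolev embeddings. One step turns $f(u)\in L^q$ into $u\in W^{1,q^{*}}_0(\Omega)\cap L^{q^{**}}(\Omega)$ with $\frac{1}{q^{*}}=\frac1q-\frac1n$ and $\frac{1}{q^{**}}=\frac1q-\frac2n$, which gives precisely the exponents in part $(b)$; the $H^1_0$ assertion is the subcase in which the attainable $W^{1,p}_0$ exponent can be taken $\ge 2$, i.e. $n<10+\frac{4\varepsilon}{1-\varepsilon}$. When instead $n<6+\frac{4\varepsilon}{1-\varepsilon}$, equivalently $\frac{3-\varepsilon}{1-\varepsilon}>\frac n2$, I use $f(t)\le C(1+t f'(t))$ together with $f'(u)\in L^{(3-\varepsilon)/(1-\varepsilon)}(\Omega)$ to restart the bootstrap; a finite number of iterations then reaches $f(u)\in L^q$ with $q>n/2$, whence $u\in L^\infty(\Omega)$, proving part $(a)$.
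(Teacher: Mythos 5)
The weak point of your plan is exactly where you flag it, and it is fatal as stated. Your ``master inequality'' $\int_\Omega f'(u)c(u)^2 \le \int_\Omega f(u)H(u)$, with $H'=(c')^2$, is a correct identity, but it is the classical Nedev/Sanch\'on framework and it runs into the well-known scale-invariance wall in precisely this setting. Take the natural candidate $c=\tilde f^{\gamma}$ (any power of $f$): convexity and one integration by parts in $H$ give $H(u)\le \frac{\gamma^2}{2\gamma-1}\tilde f(u)^{2\gamma-1}f'(u)$, so the right side is $\frac{\gamma^2}{2\gamma-1}\int f(u)\tilde f(u)^{2\gamma-1}f'(u)$, which is comparable to the left side $\int f'(u)\tilde f(u)^{2\gamma}$ with constant $\frac{\gamma^2}{2\gamma-1}\ge 1$ (equality only at $\gamma=1$). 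Hypothesis \eqref{H:tm2} gives only an \emph{upper} bound on $f'$ in terms of $f$; trading $f'$ for $f^{1-\varepsilon}$ therefore moves the right-hand side in the wrong direction and does not lower the constant below $1$. Without a \emph{lower} bound on $f'$ (which the paper explicitly refuses to assume) the estimate never closes for $\gamma>1$, so the iteration you describe never gets off the ground. In other words, the master inequality by itself reproduces, at best, the case $\gamma=1$ --- and no more.

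What the paper actually does is different in the key step. It first runs semistability exactly once, with $\eta=\tilde f(u)$, and combines it with two multiplications of the PDE by $f'(u)-f'(0)$ and its truncation, obtaining Lemma~\ref{lem1}: $\int_\Omega \tilde f(u)f'(u)\,dx\le C$ and --- crucially --- $\int_\Omega \tilde f(u)f''(u)a^{ij}u_iu_j\,dx\le C$. Then it abandons the semistability inequality and instead multiplies the \emph{equation} \eqref{eq1.10} by $\dfrac{(f'(u)-f'(0))^{\alpha}}{1+f(u)}$ with $\alpha=\frac{3-\varepsilon}{1-\varepsilon}$. After integrating by parts, the only term to control is $\alpha\int \frac{(f'(u)-f'(0))^{\alpha-1}f''(u)}{1+f(u)}a^{ij}u_iu_j$; here \eqref{H:tm2} converts $(f'(u))^{\alpha-1}$ into $\tilde f(u)^{(1-\varepsilon)(\alpha-1)}$, and the requirement $(1-\varepsilon)(\alpha-1)-1\le 1$ (so that Lemma~\ref{lem1} applies) is precisely what produces the exponent $\alpha=\frac{3-\varepsilon}{1-\varepsilon}$. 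This is the step your proposal is missing: you neither derive the quantity $\int\tilde f f''a^{ij}u_iu_j\le C$ nor use the PDE with a test function built from $f'(u)$, and without both the claimed integrability of $f'(u)$ is not established. Similarly, the estimate $\int_\Omega \tilde f(u)^{\alpha+1}/u^{\alpha}\le C$ that you would need to feed into the Nedev iteration for $f(u)\in L^p$ comes from multiplying the PDE by $(f'(u)-f'(0))^{\alpha}$ with $\alpha=1+\frac{1}{1-\varepsilon}$ and again invoking Lemma~\ref{lem1}; your sketch does not produce it.

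Two smaller remarks. Your route to part $(a)$ via ``restarting the bootstrap'' with $f(t)\le C(1+tf'(t))$ is more circuitous than necessary; once $f'(u)\in L^p$ with $p>n/2$ is known, the paper's Corollary~\ref{cor2.4} (Serrin's estimate applied to $Lu+c(x)u=-f(0)$ with $0\le c(x)=(f(u)-f(0))/u\le f'(u)$) gives $u\in L^\infty$ in one step. Finally, for part $(b)$ the crude Sobolev/$W^{2,q}$ scheme you invoke is not available here, since $a^{ij}$ are merely bounded measurable; the paper uses the De Giorgi--Stampacchia/Trudinger estimates of Proposition~\ref{lem2:1}(ii), which give only $W^{1,r}_0$ for $r<np/(n-p)$ and $L^r$ for $r<np/(n-2p)$, not full $W^{2,q}$.
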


The main novelty of our results are twofold. On the one hand, we do not assume any lower 
bound on $f'$ to obtain our estimates, nor any bound on $f''$ 
as in \cite{CR} or \cite{Sanchon} (as commented below). On the other hand, we obtain $L^p$
estimates for $f'(u)$. To our knowledge such estimates do not exist in the literature. 
In fact, using the $L^p$ estimate for $f(u)$ established in Theorem~\ref{th2} and standard 
regularity results for uniformly elliptic equations, it follows that $u$ is bounded in 
$L^\infty(\Omega)$ whenever $n<6+\frac{2\varepsilon}{1-\varepsilon}$. 
Note that the range of dimensions obtained in Theorem~\ref{th2}~(a), 
$n<6+\frac{4\varepsilon}{1-\varepsilon}$, is bigger than this one. 
This will follow from the $L^p$ estimate on $f'(u)$.
Of course, in both results (and also in the rest of the paper), $u\in L^p$ 
or $u\in W^{1,p}$ mean that $u$ is bounded in $L^p$ or in $W^{1,p}$ by a 
constant independent of $u$.

Our assumptions \eqref{eq1.40} and \eqref{H:tm2} in Theorems \ref{th1} and \ref{th2} are related to 
the hypothesis $\tau_+\leq 1$ (recall \eqref{tau}). 
Indeed, by the definition of $\tau_+$,  for every $\delta>0$
there exists $T=T(\delta)$ such that $f(t)f''(t) \leq (\tau_+ + \delta)f'(t)^2$ for all $t > T$, or equivalently,
$\frac{d}{dt} \frac{f'(t)}{f(t)^{\tau_++\delta}}\leq 0$ for all $t>T$. Thus,
\[
\frac{f'(t)}{f(t)^{\tau_+ + \delta}}
\leq 
\frac{f'(T)}{f(T)^{\tau_+ + \delta}}=C
\quad \textrm{for all } t > T.
\]
From this, it is clear that if $\tau_+\leq 1$, then assumption \eqref{eq1.40} holds 
choosing $\delta=\varepsilon$. Note that $0\leq \tau_-\leq 1$ always holds  since $f$ is a 
continuous function defined in $[0,+\infty)$. 
If instead $\tau_+<1$, then \eqref{H:tm2} is satisfied 
with $\varepsilon=1-\tau_+-\delta$, where $\delta>0$ is arbitrarily small. Therefore as 
an immediate consequence of part (i) of Theorems~\ref{th1} and  \ref{th2} we obtain the 
following.

\begin{corollary}\label{cor}
Let $ f \in C^ 2$  be convex and satisfy \eqref{eq1.30}. Let $u$ be a positive semistable 
solution of \eqref{eq1.10}. The following assertions hold:
\\
$(a)$ If $\tau_+=1$ and $n<6$ then $u\in L^\infty(\Omega)$.
\\
$(b)$ If $\tau_+<1$ and $n<2+\frac{4}{\tau_+}$ then $u\in L^\infty(\Omega)$.
\end{corollary}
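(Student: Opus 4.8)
\textbf{Proof proposal for Corollary \ref{cor}.}

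The plan is to deduce the corollary directly from the implications \emph{already extracted} in the discussion immediately preceding its statement, namely that control of $\tau_+$ yields the pointwise growth bounds \eqref{eq1.40} or \eqref{H:tm2}, and then to invoke part $(a)$ of Theorems \ref{th1} and \ref{th2}. First I would treat case $(b)$, where $\tau_+ < 1$. As shown above, for every $\delta > 0$ there is $T = T(\delta)$ with $f'(t) f(t)^{-(\tau_+ + \delta)} \leq C$ for $t > T$, i.e. $f'(t) \leq C f(t)^{\tau_+ + \delta}$ for $t > T$. If $\tau_+ > 0$, write $\tau_+ + \delta = 1 - \varepsilon$ with $\varepsilon = 1 - \tau_+ - \delta$; choosing $\delta$ small enough that $\varepsilon \in (0,1)$, hypothesis \eqref{H:tm2} holds. (If $\tau_+ = 0$ one may still pick any $\varepsilon \in (0,1)$ and $\delta = 1 - \varepsilon$, so \eqref{H:tm2} holds a fortiori; alternatively the conclusion in this sub-case is subsumed by the $\varepsilon$ close to $1$ analysis below.) Theorem \ref{th2}$(a)$ then gives $u \in L^\infty(\Omega)$ provided $n < 6 + \frac{4\varepsilon}{1-\varepsilon}$. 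Substituting $1 - \varepsilon = \tau_+ + \delta$, this condition reads $n < 6 + \frac{4(1 - \tau_+ - \delta)}{\tau_+ + \delta} = 2 + \frac{4}{\tau_+ + \delta}$. Since $\delta > 0$ is arbitrary and the map $\delta \mapsto 2 + \frac{4}{\tau_+ + \delta}$ is continuous and strictly decreasing with limit $2 + \frac{4}{\tau_+}$ as $\delta \to 0^+$, any $n$ satisfying the strict inequality $n < 2 + \frac{4}{\tau_+}$ also satisfies $n < 2 + \frac{4}{\tau_+ + \delta}$ for $\delta$ small enough. Fixing such a $\delta$, Theorem \ref{th2}$(a)$ applies and yields $u \in L^\infty(\Omega)$, proving $(b)$.

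Next I would treat case $(a)$, where $\tau_+ = 1$. Here, by the same computation, for every $\delta > 0$ there is $T(\delta)$ with $f'(t) \leq C f(t)^{1 + \delta}$ for $t > T$, which is exactly assumption \eqref{eq1.40} with $\varepsilon = \delta$. Since \eqref{eq1.40} is required to hold for \emph{every} $\varepsilon > 0$ (with $T$ and $C$ depending on $\varepsilon$), and this is precisely what we have just verified, the hypotheses of Theorem \ref{th1} are met. Theorem \ref{th1}$(a)$ then gives $u \in L^\infty(\Omega)$ whenever $n \leq 5$, that is, $n < 6$, which is the claim in $(a)$.

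I do not expect a genuine obstacle: the corollary is a packaging of the two theorems together with the elementary ODE observation, already carried out in the text, that a finite $\tau_+$ forces a power-type bound on $f'$ in terms of $f$. The only point requiring a little care is the passage from the strict inequality in the hypothesis of $(b)$ to the strict inequality needed to apply Theorem \ref{th2}$(a)$: one must choose the auxiliary parameter $\delta$ after $n$ is given, exploiting that the threshold $2 + \frac{4}{\tau_+ + \delta}$ depends continuously on $\delta$ and tends to $2 + \frac{4}{\tau_+}$ from below as $\delta \downarrow 0$. A symmetric remark applies in $(a)$, though there the threshold $n < 6$ is already independent of $\delta$, so no limiting argument is needed. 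Finally, one should note that both theorems also require $f$ convex and \eqref{eq1.30}, and $u$ a positive semistable solution — all of which are among the standing hypotheses of the corollary, so nothing further must be checked.
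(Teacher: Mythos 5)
Your proposal is correct and follows exactly the paper's route: the authors derive $f'(t)\leq Cf(t)^{\tau_++\delta}$ for $t>T(\delta)$ from the definition of $\tau_+$ in the discussion preceding the corollary, and then deduce the two cases as immediate consequences of Theorem \ref{th1}$(a)$ (taking $\varepsilon=\delta$) and Theorem \ref{th2}$(a)$ (taking $\varepsilon=1-\tau_+-\delta$ with $\delta$ small), just as you do. Your explicit handling of the limiting argument in $\delta$ and of the sub-case $\tau_+=0$ is a correct elaboration of what the paper leaves implicit.
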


If $\tau_+<1$, then for every $\varepsilon\in(0,1-\tau_+)$, there exists a positive constant $C$ such that 
$f(t)\leq C(1+t)^\frac{1}{1-\tau_+-\varepsilon}$ for all $t\geq 0$ (this can be easily 
seen integrating twice in the definition of $\tau_+$). Thus under this hypothesis, $f$ has at most polynomial growth.

All the results in the literature considering $\tau_-$ and $\tau_+$ (defined in \eqref{tau})
assume $\tau_->0$. Instead in Corollary \ref{cor}, no assumption is made on $\tau_-$.

Crandall and Rabinowitz \cite{CR} proved an a priori $L^{\infty}$ 
bound for semistable solutions when $0<\tau_- \leq \tau_+ <2+\tau_-+2\sqrt{\tau_-}$ 
and $n<4+2\tau_-+4\sqrt{\tau_-}$. Note that for nonlinearities $f$ such that $\tau_-=1$ and $\tau_+<5$ one obtains the 
$L^\infty$ bound if $n\leq 9$ (a dimension which is optimal). This is the case for many exponential type 
nonlinearities, as for instance $f(u)=e^{u^\alpha}$ for any $\alpha\in\mathbb{R}^+$. 
The results in \cite{CR} were improved in \cite{Sanchon} establishing that 
$u\in L^\infty(\Omega)$ whenever $\tau_->0$ and $n<6+4\sqrt{\tau_-}$ (remember that $\tau_-\leq 1$). 
Moreover, if $0<\tau_-\leq\tau_+<1$, then using an iteration argument in \cite{CR}, one has that 
$u\in L^\infty(\Omega)$ whenever $n<2+\frac{4}{\tau_+}\left(1+\sqrt{\tau_-}\right)$. 
Note that Corollary \ref{cor} coincides with these results in the case where $\tau_-=0$.

Let us make some further comments on conditions \eqref{eq1.40}
and \eqref{H:tm2} in Theorems~\ref{th1} and \ref{th2}, respectively.

\begin{remark}\label{rk1}
$(i)$ Condition \eqref{eq1.40} is equivalent to 
$$
\limsup_{t\rightarrow+\infty}\frac{{\rm log} f'(t)}{{\rm log} f(t)}\leq 1,
$$
since \eqref{eq1.40} holds if and only if
$$
\frac{{\rm log} f'(t)}{{\rm log} f(t)}\leq (1+\varepsilon)+\frac{C}{{\rm log} f(t)}
\quad\textrm{for all }t>T;
$$
note that $f(t)\rightarrow+\infty$ as $t\rightarrow+\infty$ by \eqref{eq1.30}.
Many nonlinearities $f$ satisfy this condition (like exponential or power type 
nonlinearities).

$(ii)$ Setting $s=f(t)$ and $t=\gamma(s)$, \eqref{eq1.40} is equivalent to the condition 
$
\gamma'(s) \geq \theta s^{-1-\varepsilon}
$ 
for some $\theta>0$ and for all $s$ sufficiently large. 
This clearly shows that \eqref{eq1.40} does not follow from the convexity of $f$
alone (which is equivalent to $\gamma'$ being nonincreasing). 

Instead, condition \eqref{H:tm2} is equivalent 
to $\gamma'(s)\geq \theta s^{-1+\varepsilon}$ for some $\theta>0$ and for all $s$ sufficiently 
large. In particular, $\gamma(s)\geq \theta s^\varepsilon$ for $s$ large enough, or equivalently,
$f(t)\leq C (1+t)^\frac{1}{\varepsilon}$ for some constant $C>0$ and for all $t$.

On the other hand, $f(t)f''(t)/f'(t)^2=-s\gamma''(s)/\gamma'(s)$, a second derivative condition on 
$\gamma$, in contrast with the first derivative conditions of \eqref{eq1.40} and \eqref{H:tm2}. Therefore, for most 
nonlinearities satisfying \eqref{eq1.40} (or \eqref{H:tm2}), the limit $ff''/(f')^2$ at infinity does not exist 
(\textit{i.e.}, $\tau_-<\tau_+$) and in addition, it may happen that $\tau_-=0$.

$(iii)$ Note that by convexity, $\varepsilon f'(t) \leq f(t+\varepsilon)-f(t)
\leq f(t+\varepsilon)$ for all $t$. Therefore, \eqref{eq1.40} holds if 
$f(t+\varepsilon)\leq Cf(t)^{1+\varepsilon}$ for all $t$ sufficiently large.
\end{remark}

\section{Preliminary estimates}
\label{sec2}
\setcounter{equation}{0}


We start by recalling the following standard regularity result for uniformly elliptic 
equations.

\begin{proposition}\label{lem2:1}
Let $a^{ij}=a^{ji}$, $1\leq i,j\leq n$, be measurable functions on
a bounded domain $\Omega$. Assume that there exist positive constants $c_0$
and $C_0$ such that \eqref{uniform-ellipticity} holds. Let $u\in H^1_0(\Omega)$
be a weak solution of 
$$
\left\{
\begin{array}{rccl}
Lu+c(x)u&=&g(x)&\textrm{in }\Omega,\\
u&=&0&\textrm{on }\partial \Omega,
\end{array}
\right.
$$
with $c$, $g\in L^p(\Omega)$ for some $p\geq 1$. 

Then, there exists a positive constant $C$ independent of $u$ such that 
the following assertions hold:

$(i)$ If $p>n/2$ then $\|u\|_{L^\infty(\Omega)}\leq C(\|u\|_{L^1(\Omega)}+\|g\|_{L^p(\Omega)})$.

$(ii)$ Assume $c\equiv 0$. If $1\leq p<n/2$ then
$
\|u\|_{L^{r}(\Omega)}\leq C\|g\|_{L^p(\Omega)}
$
for every $1\leq r<np/(n-2p)$.
Moreover, $\|u\|_{W^{1,r}_0(\Omega)}\leq C$ for every $1\leq r <np/(n-p)$.
\end{proposition}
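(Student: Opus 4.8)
The plan is to obtain both statements from the De Giorgi--Nash--Moser theory for divergence-form operators with merely measurable coefficients; these estimates are classical (see e.g.\ Gilbarg--Trudinger, Chapter~8, or Stampacchia), so I will only outline the structure of the argument.

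For $(i)$ I would use Stampacchia's truncation (level-set) method. For $k\ge 0$ set $A_k=\{x\in\Omega:u(x)>k\}$ and test the weak formulation with $\eta=(u-k)^+\in H^1_0(\Omega)$; the left-hand side is bounded below using \eqref{uniform-ellipticity} by $c_0\int_{A_k}|\nabla (u-k)^+|^2$, while the right-hand side is $\int_{A_k}\bigl(g-c\,u\bigr)(u-k)^+\,dx$. The contribution of $g$ is estimated by H\"older's inequality and the Sobolev embedding of $H^1_0$ into $L^{2n/(n-2)}$, picking up a positive power of $|A_k|$ because $p>n/2$. For the zeroth-order term one writes $u=(u-k)^++k$ on $A_k$ and bounds $\int_{A_k}|c|\,((u-k)^+)^2$ by $\|c\|_{L^{n/2}(A_k)}$ times $\|\nabla(u-k)^+\|_{L^2}^2$; since $\|c\|_{L^{n/2}(A_k)}\le\|c\|_{L^p(\Omega)}|A_k|^{2/n-1/p}\to 0$ as $|A_k|\to 0$, this term is absorbed into the left-hand side once $k$ exceeds a threshold $k_0$, with $k_0$ controlled by $\|u\|_{L^1}$ via Chebyshev's inequality $|A_k|\le\|u\|_{L^1}/k$. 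What remains is an inequality of the form
\[
|A_h|\le \frac{C\,(k+\|g\|_{L^p})}{h-k}\,|A_k|^{1+\delta},\qquad h>k\ge k_0,
\]
with $\delta>0$; Stampacchia's iteration lemma then yields $|A_k|=0$ for $k\ge k_0+\kappa$ with $\kappa\le C(\|u\|_{L^1}+\|g\|_{L^p})$, i.e.\ $\sup_\Omega u\le C(\|u\|_{L^1}+\|g\|_{L^p})$. Applying the same argument to $-u$ controls the infimum, which proves $(i)$; note that the constant is allowed to depend on $\|c\|_{L^p(\Omega)}$, $n$, $c_0$, $C_0$, $p$ and $\Omega$, but not on $u$.

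For $(ii)$, where $c\equiv 0$, I would use the Green's function $G(x,y)$ of $L$ with Dirichlet condition, which exists and (note that $1\le p<n/2$ forces $n\ge 3$) satisfies the classical pointwise bounds $0\le G(x,y)\le C|x-y|^{2-n}$ and $|\nabla_x G(x,y)|\le C|x-y|^{1-n}$, with $C=C(n,c_0,C_0,\Omega)$. Since $u(x)=\int_\Omega G(x,y)g(y)\,dy$ and $\nabla u(x)=\int_\Omega \nabla_xG(x,y)g(y)\,dy$, one gets the pointwise bounds $|u|\le C\,I_2(|g|)$ and $|\nabla u|\le C\,I_1(|g|)$, where $I_\alpha$ denotes the Riesz potential of order $\alpha$. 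The Hardy--Littlewood--Sobolev inequality then gives $\|I_2(|g|)\|_{L^r}\le C\|g\|_{L^p}$ for $1/r\ge 1/p-2/n$ (using also that $\Omega$ is bounded for the non-endpoint range, and the weak-type version when $p=1$), hence $\|u\|_{L^r}\le C\|g\|_{L^p}$ for all $r<np/(n-2p)$; likewise $\|\nabla u\|_{L^r}\le C\|g\|_{L^p}$ for all $r<np/(n-p)$, and since $u\in W^{1,r}(\Omega)$ with $u=0$ on $\partial\Omega$ this gives $\|u\|_{W^{1,r}_0(\Omega)}\le C$.

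The main obstacle is, in $(i)$, the zeroth-order term: because $c$ is only in $L^p$ it cannot be treated as a bounded coefficient, and one must exploit the strict inequality $p>n/2$ to make $\|c\|_{L^{n/2}}$ small on the (small-measure) super-level sets so that the iteration closes. In $(ii)$, the delicate ingredient is the gradient estimate $|\nabla_xG(x,y)|\le C|x-y|^{1-n}$ for operators with merely measurable coefficients, which is needed to reach the full range $r<np/(n-p)$ in the $W^{1,r}$ bound and goes beyond the elementary energy estimate; granting it (and the bound on $G$ itself), the rest is just the mapping properties of Riesz potentials.
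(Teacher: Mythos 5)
The paper does not prove this proposition at all: it invokes Serrin's Theorem~3 (a Moser-iteration result, stated with $\|u\|_{L^2}$, followed by an interpolation step to replace $L^2$ by $L^1$) for part $(i)$, and Trudinger's Theorems~4.1 and~4.3 for part $(ii)$. Your Stampacchia truncation argument for part $(i)$ is a legitimate alternative to Moser iteration; it is essentially the same circle of ideas (De Giorgi--Nash--Moser) and has the mild advantage of producing the $L^1$-based constant directly via the Chebyshev step $|A_k|\le \|u\|_{L^1}/k$, rather than first getting an $L^2$-based bound and then interpolating as the paper indicates.

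Part $(ii)$, however, has a genuine gap precisely at the point you flagged as ``delicate.'' The pointwise gradient bound $|\nabla_x G(x,y)|\le C|x-y|^{1-n}$ is \emph{false} for operators with merely bounded measurable coefficients; it requires Dini-continuity (or Lipschitz/H\"older) of $a^{ij}$. With only $a^{ij}\in L^\infty$, solutions of $Lv=0$ are merely $C^{0,\alpha}$ by De Giorgi--Nash, and their gradients can be unbounded. What does hold for measurable coefficients -- and what the cited Trudinger result actually uses -- is the pair of \emph{weak-type} estimates
\[
\|G(x,\cdot)\|_{L^{\frac{n}{n-2},\infty}(\Omega)}\le C,
\qquad
\|\nabla_x G(x,\cdot)\|_{L^{\frac{n}{n-1},\infty}(\Omega)}\le C,
\]
uniformly in $x$ (these are the Littman--Stampacchia--Weinberger and Gr\"uter--Widman type bounds, available in Trudinger's setting). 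From these, the $L^r$ bound on $u$ and the $W^{1,r}$ bound on $u$ follow by convolution-type estimates with a kernel in weak $L^q$ (Marcinkiewicz interpolation / O'Neil's inequality), in exactly the stated ranges $r<np/(n-2p)$ and $r<np/(n-p)$. Your $L^r$ estimate on $u$ itself is fine, since the pointwise bound $0\le G(x,y)\le C|x-y|^{2-n}$ \emph{does} survive for measurable coefficients; it is only the pointwise gradient bound that must be replaced by the weak-$L^{n/(n-1)}$ bound. With that substitution your argument for $(ii)$ becomes correct and coincides with the reference the paper cites.
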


Part (i) of Proposition~\ref{lem2:1} is established in Theorem~3 of \cite{Serrin} 
with the $L^2$-norm of $u$ instead of the $L^1$-norm. However, an immediate interpolation 
argument shows that the result also holds with $\|u\|_{L^1(\Omega)}$. Note also 
that in the right hand side of this estimate, $\|u\|_{L^\infty(\Omega)}\leq
C(\|u\|_{L^1(\Omega)}+\|g\|_{L^p(\Omega)})$,  some dependence of $u$ must appear (think 
on the equation with $g\equiv 0$ satisfied by the eigenfunctions of the Laplacian).
For part~(ii) we refer to Theorems~4.1 and 4.3 of \cite{Trudinger}.

As an easy consequence of Proposition~\ref{lem2:1}~(i) we obtain the following:
\begin{corollary} \label{cor2.4} 
Let  $u\in H^1_0(\Omega)$ be a nonnegative weak solution of \eqref{eq1.10} 
with $f$ nondecreasing and convex. Assume $p>n/2$.
If  there exists a positive constant $C$ independent of $u$ such that 
$\|u\|_{L^1(\Omega)}\leq C$ and $\|f'(u)\|_{L^p (\Omega)}\leq C$,  
then $\|u\|_{L^\infty(\Omega)}\leq C$ for some positive constant $C$ independent of $u$.
\end{corollary}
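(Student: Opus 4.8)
The plan is to put problem \eqref{eq1.10} into the form covered by Proposition~\ref{lem2:1}~(i), by absorbing the nonlinearity $f(u)$ into a zeroth-order potential term $c(x)u$ plus a constant right-hand side. Concretely, I would set
\[
c(x):=
\begin{cases}
\dfrac{f(u(x))-f(0)}{u(x)}, & u(x)>0,\\[2mm]
f'(0), & u(x)=0,
\end{cases}
\qquad
g(x):=-f(0).
\]
Since $c(x)\,u(x)=f(u(x))-f(0)$ for a.e.\ $x$, the identity $Lu+f(u)=0$ becomes $Lu+c(x)u=g(x)$ in $\Omega$, and from the weak formulation of \eqref{eq1.10} one checks at once that $u\in H^1_0(\Omega)$ is a weak solution of this linear Dirichlet problem. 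The function $c$ is measurable (it is $\phi(u)$ for a continuous $\phi$ on $[0,\infty)$, using differentiability of $f$ at $0$), and $g$ is constant, so $g\in L^p(\Omega)$ with $\|g\|_{L^p(\Omega)}=|f(0)|\,|\Omega|^{1/p}$, a bound independent of $u$.

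The key estimate will be $0\le c(x)\le f'(u(x))$ for a.e.\ $x\in\Omega$. The lower bound holds because $f$ is nondecreasing and $u\ge 0$, so that $f(u)\ge f(0)$ and $f'(0)\ge 0$. For the upper bound I would use convexity: the supporting-line inequality of $f$ at the point $u(x)$ gives $f(0)\ge f(u(x))+f'(u(x))\bigl(0-u(x)\bigr)$, i.e.\ $f(u(x))-f(0)\le u(x)\,f'(u(x))$, whence $c(x)\le f'(u(x))$ when $u(x)>0$ (and trivially when $u(x)=0$). Consequently
\[
\|c\|_{L^p(\Omega)}\le \|f'(u)\|_{L^p(\Omega)}\le C,
\]
uniformly in $u$ by hypothesis.

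Finally I would invoke Proposition~\ref{lem2:1}~(i) with this potential $c$, this datum $g$, and the given exponent $p>n/2$: it produces a constant $C$ — depending only on $n$, $\Omega$, the ellipticity constants $c_0,C_0$, on $p$, and on the (uniformly bounded) size of $\|c\|_{L^p(\Omega)}$ — such that $\|u\|_{L^\infty(\Omega)}\le C\bigl(\|u\|_{L^1(\Omega)}+\|g\|_{L^p(\Omega)}\bigr)$. Combined with $\|u\|_{L^1(\Omega)}\le C$ and the bound on $\|g\|_{L^p(\Omega)}$ above, this yields $\|u\|_{L^\infty(\Omega)}\le C$ with $C$ independent of $u$, as claimed. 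I do not expect a genuine obstacle: the only point requiring care is that the constant in Proposition~\ref{lem2:1}~(i) depends on the potential only through $\|c\|_{L^p(\Omega)}$, which is harmless here because the hypothesis bounds that norm uniformly. The entire content of the corollary is precisely this convexity device, which converts an $L^p$ bound on $f'(u)$ into an $L^p$ bound on the linearized potential $c$.
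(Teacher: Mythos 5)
Your argument is exactly the paper's proof: rewrite \eqref{eq1.10} as $Lu+c(x)u=-f(0)$ with $c(x)=(f(u)-f(0))/u$, use convexity and monotonicity to get $0\le c\le f'(u)$, and invoke Proposition~\ref{lem2:1}~(i). The only difference is that you spell out the supporting-line inequality and the measurability of $c$, details the paper leaves implicit.
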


\begin{proof} 
Rewrite equation \eqref{eq1.10} as $Lu +c(x)u=-f(0)$ where $c(x)=(f(u)-f(0))/u$. 
Then by convexity,  $0\leq c(x)\leq f'(u)$ and the result follows by Proposition~\ref{lem2:1}~(i).
\end{proof}

The following estimates involving
$$
\tilde{f}(u):=f(u)-f(0)
$$
are due to Nedev \cite{Nedev} when $L=\Delta$. 
We give here a new proof of the estimates consistent with our own approach. 
Note that assumptions \eqref{eq1.40} and \eqref{H:tm2}
in Theorems~\ref{th1} and \ref{th2}, respectively, also hold replacing $f$ by $\tilde{f}$ on their right hand side, since 
$f(t)\leq 2(f(t)-f(0))=2\tilde{f}(t)$ for $t$ large enough. We will use this fact in the proof 
of both results.

\begin{lemma} \label{lem1} 
Let $ f \in C^ 2$  be convex and satisfy \eqref{eq1.30}. 
If $u$ is a positive semistable solution of \eqref{eq1.10}, 
then there exists a positive constant $C$ independent of $u$ such that
\begin{equation} \label{eq2.27} 
\int_{\Omega} \tilde{f}(u)f'(u)~dx \leq C
\quad\textrm{and}\quad 
\int_{\Omega} \tilde{f}(u) f''(u) a^{ij}(x)u_i u_j ~dx \leq C.
\end{equation}
\end{lemma}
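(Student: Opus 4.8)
The plan is to exploit the semistability inequality \eqref{eq1.20} with a carefully chosen test function built from $u$ itself, and then to use the equation to integrate by parts and produce the two quantities in \eqref{eq2.27}. The natural first choice is $\eta = \tilde f(u) = f(u)-f(0)$, which lies in $H^1_0(\Omega)$ since $u$ does and $f$ is locally Lipschitz. With this choice the right-hand side of \eqref{eq1.20} becomes $\int_\Omega a^{ij}(x)\, \p_i(\tilde f(u))\, \p_j(\tilde f(u))\,dx = \int_\Omega f'(u)^2 a^{ij}(x) u_i u_j\,dx$, while the left-hand side is $\int_\Omega f'(u)\tilde f(u)^2\,dx$. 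To relate the right-hand side back to $\tilde f(u)f'(u)$ one multiplies the equation $Lu = -f(u)$ by a function of $u$: testing $Lu+f(u)=0$ against $g(u)$ for suitable $g$ and integrating by parts gives $\int_\Omega g'(u) a^{ij}(x)u_iu_j\,dx = \int_\Omega f(u) g(u)\,dx$. The idea is to pick $g$ so that $g'(u)$ matches (up to lower order) the weight $f'(u)^2$ appearing on the right of the semistability inequality, so that the Dirichlet-type term can be eliminated between the two identities.

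Concretely, I would first record the \emph{stability identity}: choosing $g=g(u)$ with $g(0)=0$ in the weak formulation of \eqref{eq1.10} yields
\begin{equation}\label{pp:eq1}
\int_{\Omega} g'(u)\, a^{ij}(x) u_i u_j \, dx = \int_{\Omega} f(u) g(u)\, dx .
\end{equation}
Then I would combine \eqref{pp:eq1} (with a judicious $g$, for instance $g$ chosen so that $g' = f'\tilde f$, i.e.\ essentially $g = \tfrac12 \tilde f^2 \cdot(\text{something})$ — more precisely one wants the derivative identity $\p_i(\tilde f(u))\,f'(u)\,\p_i u$ to be an exact derivative, giving $g$ with $g'=f'{}^2\,$ handled via $\tilde f$) with the semistability inequality \eqref{eq1.20} applied to $\eta=\tilde f(u)$. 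Subtracting eliminates the gradient integral and leaves a differential inequality purely in terms of integrals of products of $\tilde f(u)$, $f'(u)$, and $f''(u)a^{ij}u_iu_j$, from which the first bound in \eqref{eq2.27} should follow after controlling the resulting lower-order terms. The second bound, involving $\tilde f(u)f''(u)a^{ij}u_iu_j$, is then extracted from \eqref{pp:eq1} with $g$ chosen so that $g'(u)=\tilde f(u)f''(u)$ (so $g(u)=\int_0^u \tilde f f''$, which by convexity and the already-established first estimate can be compared to $\tilde f(u)f'(u)$ and to $f(u)g(u)$), closing the argument.

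Two technical points need care. First, $\tilde f(u)$ is an admissible test function only if it and its product with $f'(u)$ have the required integrability; since a priori we only know $u\in H^1_0$, I would run the argument with truncations $f_k$ or with $\eta = \min(\tilde f(u),k)$, derive the estimates with constants independent of $k$, and pass to the limit by monotone convergence — this is the standard device and should be stated but not belabored. Second, and this is where the main obstacle lies, after eliminating the gradient term one is left with an inequality of the shape $\int \tilde f f' \le (\text{const})\int \tilde f f' \cdot(\text{bounded factor}) + \text{l.o.t.}$, and one must verify that the constant in front is genuinely $<1$ (or absorb it) so that the integral can be moved to the left-hand side; the factor of $4$ in the $H^1_0$ bound $\|\nabla\eta\|^2$ versus the quadratic form, together with convexity ($\tilde f f'' \ge 0$, $\tilde f(t)\le t f'(t)$), is exactly what makes the algebra close, and getting these constants to line up is the crux. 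The appearance of $f(0)>0$ is used to handle the boundary/lower-order contributions (the term $\int f(u)g(u)$ with the $f(0)$ part), and uniform ellipticity \eqref{uniform-ellipticity} is used only to pass between $a^{ij}u_iu_j$ and $|\nabla u|^2$ when needed; neither the growth hypotheses \eqref{eq1.40} nor \eqref{H:tm2} are required for this lemma, which is why it can be attributed (in the $L=\Delta$ case) to Nedev.
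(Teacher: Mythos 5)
Your starting moves coincide with the paper's: take $\eta=\tilde f(u)$ in the semistability inequality \eqref{eq1.20}, and use the weak form of \eqref{eq1.10} tested against $g(u)$, which gives $\int_\Omega g'(u)\,a^{ij}u_iu_j\,dx=\int_\Omega f(u)g(u)\,dx$. But the plan does not close, for a concrete reason. The combination you describe --- eliminating the gradient term by matching $g'$ to $f'^2$ --- yields, with the correct choice $g(u)=\tilde f(u)f'(u)$ (so $g'=f'^2+\tilde f f''$), only the relation
\[
\int_\Omega \tilde f(u)\,f''(u)\,a^{ij}u_iu_j\,dx \;\le\; f(0)\int_\Omega \tilde f(u)\,f'(u)\,dx,
\]
i.e.\ the second estimate in \eqref{eq2.27} controlled by the first; it does not bound the first. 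With the literal choices you float ($g'=f'\tilde f$ or $g'=f'^2$), the subtraction degenerates: since $\int_0^u f'^2\,ds \le f'(u)\tilde f(u)$ by monotonicity of $f'$, the comparison collapses to the tautology $0\le f(0)\int_\Omega\tilde f(u) f'(u)\,dx$. There is no constant strictly less than $1$ to absorb here, so the ``factor of $4$'' heuristic you invoke does not match the actual mechanism.

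The crux, which is absent from your plan, is obtaining the bound $\int_\Omega\tilde f(u)f'(u)\,dx\le C$ itself. The paper does this by testing \eqref{eq1.10} against $\zeta=f'(u)-f'(0)$ and against the truncation $\zeta$ equal to $f'(u)-f'(M)$ on $\{u>M\}$ and $0$ elsewhere, for a level $M$ chosen so that $f(t)>2f(0)+2$ for $t\ge M$; combining these with the stability relation above and with the elementary bound $\int_{\{u\le M\}}a^{ij}u_iu_j\,dx\le M\int_\Omega f(u)\,dx$ (from testing against $\min\{u,M\}$) produces
\[
\int_{\{u>\bar M\}} f(u)f'(u)\,dx\le C\int_\Omega f(u)\,dx ,
\]
after choosing $\bar M$ with $f'(M)<f'(t)/2$ for $t>\bar M$. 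The final absorption then uses, crucially, that $f'(t)\to\infty$ as $t\to\infty$ (a consequence of convexity and the superlinearity in \eqref{eq1.30}), so $f(u)$ is dominated by $f(u)f'(u)$ where $u$ is large. None of these ingredients --- the test function $f'(u)-f'(0)$, its truncated variant, the choice of $M$ exploiting $f(0)>0$, or the absorption via $f'\to\infty$ --- appears in your proposal, and the test function $g'=\tilde f f''$ you suggest for the second estimate is neither needed nor productive: the second estimate follows for free from the displayed relation once the first is established. You are right that \eqref{eq1.40}/\eqref{H:tm2} are not needed for this lemma and that a truncation device handles the admissibility of $\tilde f(u)$ as a test function, but the main argument is missing.
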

\begin{proof}
Let $u$ be a semistable solution of \eqref{eq1.10} and $\tilde{f}(u) = f (u) - f(0)$. Note 
that $\tilde{f}(u)$ satisfies 
$$
L (\tilde{f}(u))+ f'(u) \tilde{f} (u)   = -f (0)f' (u) +f '' (u)a^{ij}(x)u_i u_j .
$$
Multiplying the previous identity by $\tilde{f}(u)$ and using the semistability condition 
\eqref{eq1.20},  we obtain
$$
\begin{aligned}
 0 &  \leq \int_{\Omega} (a^{ij}(\tilde{f}(u))_i (\tilde{f}(u))_j -f'(u)\tilde{f}(u)^2)~ dx \\
 &= f (0)\int_{\Omega} f'(u)\tilde{f}(u)~dx-\int_{\Omega}
\tilde{f}(u) f''(u) a^{ij}u_i u_j ~dx,
\end{aligned}
$$
or equivalently,
\begin{equation} \label{eq2.25}
\int_{\Omega}
\tilde{f}(u) f''(u) a^{ij}(x)u_i u_j ~dx \leq f (0)\int_{\Omega} \tilde{f}(u)f'(u)~dx.
\end{equation}
As a consequence, the second estimate in \eqref{eq2.27} follows by the first one.

Multiplying the equation \eqref{eq1.10} by the test functions
$\zeta=f'(u)-f'(0)$ and 
$$
\zeta
=\left\{ 
\begin{array}{cll} 
0 &\textrm{if}&u\leq M
\\
f'(u)-f'(M) &\textrm{if}&u> M,
\end{array} 
\right.
$$
we find
\begin{equation} \label{eq2.30} 
\int_{\Omega}f''(u) a^{ij}(x)u_i u_j~dx= \int_{\Omega} f(u)(f'(u)-f'(0))~dx 
\end{equation}
and
\begin{equation} \label{eq2.40} 
\int_{ \{u>M\}}f''(u) a^{ij}(x)u_i u_j~dx= \int_{\{u>M\}} f(u)(f'(u)-f'(M))~dx,
\end{equation}
respectively.

Combining \eqref{eq2.25} and \eqref{eq2.30}, we obtain
\begin{equation} \label{eq2.50} 
\int_{\Omega}(f(u)-2f(0))f''(u)a^{ij}(x)u_i u_j~dx
\leq 
f(0)f'(0) \int_{\Omega} f(u)~dx
-
f(0)^2\int_{\Omega}f'(u)~dx. 
\end{equation}
Choose $M$ (depending on $f$) such that $f(t)>2f(0)+2$ for all $t\geq M$. 
On the one hand, using \eqref{eq2.40}, the convexity of $f$, 
and that  $(a^{ij})$ is a positive definite matrix, we obtain
$$
\begin{array}{lll}
\displaystyle 2\int_{\{u>M\}}f(u)(f'(u)-f'(M))~dx
&=&
\displaystyle 2\int_{\{u>M\}}f''(u)a^{ij}(x)u_iu_j~dx
\\
&\leq&
\displaystyle \int_{\{u>M\}}(f(u)-2f(0))f''(u)a^{ij}(x)u_iu_j~dx.
\end{array}
$$
On the other hand, for some constant $C$ depending only on $f$ (and $M$),  
there holds
$$
\begin{array}{lll}
\displaystyle -\int_{\{u\leq M\}}(f(u)-2f(0))f''(u)a^{ij}(x)u_iu_j~dx
&\leq& 
\displaystyle C\int_{\{u\leq M\}}a^{ij}(x)u_iu_j~dx
\\
&\leq&
\displaystyle CM\int_\Omega f(u)~dx,
\end{array}
$$
where the last inequality follows from multiplying equation \eqref{eq1.10} by 
$\min\{u,M\}$. Combining the previous bounds with
\eqref{eq2.50}, it follows that
\begin{equation} \label{eq2.60}
2\int_{\{u>M\}}f(u)(f'(u)-f'(M))~dx
\leq
f(0)f'(0)\int_\Omega f(u)~dx + CM\int_\Omega f(u)~dx.
\end{equation}

Finally, choose $\overline{M}>M$ (depending only on $f$) such that $f'(M)<\frac{f'(t)}2$ if 
$t>\overline{M}$. Then \eqref{eq2.60} implies
$$
 \int_{\{u>\overline{M}\}} f(u)f'(u)~dx\leq C\int_{\Omega} f(u)~dx,
$$
and using that $f'(t)\rightarrow+\infty$ at infinity (see Remark \ref{remL1} below), 
we conclude $$\int_\Omega f(u)f'(u)~dx\leq C~,$$ where $C$ is independent of $u$.
\end{proof}

\begin{remark}\label{remL1} 
Note that $\tilde{f}(t)/t \leq f'(t)$ for all $t\geq 0$ since $f$ is convex. In particular, by
condition \eqref{eq1.30}, we obtain $\lim_{t\goto \infty}f'(t)=\infty$. Therefore, as a consequence 
of estimate \eqref{eq2.27} we obtain 
\begin{equation}\label{L1}
\int_{\Omega}f(u)dx\leq C, 
\end{equation}
where $C$ is a constant independent of $u$. As in \cite{Nedev}, from this and Proposition~\ref{lem2:1}~(ii), 
one deduces
\begin{equation}\label{L1:bis}
u \textrm{ in }L^q(\Omega) \textrm{ for all  }q<n/(n-2).
\end{equation}
Our results improve this estimate under the additional assumptions on $f$ 
of Theorems~\ref{th1} and \ref{th2}.

The following is a sufficient condition on $f$ to guarantee $u\in H^1_0(\Omega)$. 
Note that by convexity of $f$, $tf'(t)-\tilde{f}(t)\geq 0$ for all $t\geq 0$.
If we further assume that for some $\e>0$, $tf'(t)-\tilde{f}(t) \geq \varepsilon t$ for $t>T(\varepsilon)$, 
then $u$ is bounded in $H^1_0(\Omega)$ by a constant independent of $u$.  Indeed, 
noting that
$$
\begin{array}{lll}
\displaystyle \varepsilon\int_\Omega a^{ij}(x)u_iu_j~dx
&=&
\displaystyle \varepsilon\int_ \Omega f(u)u~dx
\leq C+ \int_ \Omega f(u)(u f'(u)- \tilde{f}(u))~dx
\\
&=&
\displaystyle C+\int_\Omega a^{ij}(x)u_i (u f'(u)- \tilde{f}(u))_j ~dx\\
&=&
\displaystyle C+\int_\Omega u f''(u) a^{ij}(x)u_i u_j ~dx
\leq
C,
\end{array}
$$
where in the last inequality we used the superlinearity of $f$ and the second 
estimate in \eqref{eq2.27}.
\end{remark}

\section{Proof of Theorems \ref{th1} and \ref{th2}}
\label{sec3}
\setcounter{equation}{0}

\begin{proof}[Proof of Theorem~{\rm \ref{th1}}]
Assume \eqref{eq1.40}. In fact, as we said before Lemma~\ref{lem1} we
may assume that \eqref{eq1.40} holds replacing $f$ by $\tilde{f}$:
for every $\varepsilon > 0$, there exist 
$T = T (\varepsilon)$ and $C=C(\varepsilon) $ such that
\begin{equation} \label{eq1.40:bis} 
f '(t) \leq C \tilde{f}(t)^{1+\varepsilon}\quad   \text{for all } t > T. 
\end{equation}
In the following, the constants $C$ may depend on $\varepsilon$ and $T$ but 
are independent of $u$.

We start by proving that $f'(u)\in L^p(\Omega)$ for all $p<3$ and as a consequence 
the statement in part $(a)$. 
Let $\alpha = \frac{3+\varepsilon}{1+\varepsilon}$ (with $\varepsilon$ as in \eqref{eq1.40:bis}). 
Multiplying \eqref{eq1.10} by $\frac{(f'(u)-f'(0))^{\alpha}}{1+f(u)}$ 
and integrating by parts we obtain
\begin{equation} \label{eq3.10} 
\begin{aligned}
& \int_{\Omega}\frac{f(u)}{1+f(u)} (f'(u)-f'(0))^{\alpha}~dx 
+ 
\int_{\Omega} \frac{f'(u)}{(1+f(u))^2} (f'(u)-f'(0))^{\alpha}a^{ij}(x)u_i u_j ~dx 
\\
&= \alpha \int_{\Omega} \frac{f''(u)}{1+f(u)} (f'(u)-f'(0))^{\alpha-1} a^{ij}(x)u_i u_j ~dx
\\
&\leq 
\alpha  \int_{\{u \leq T\}}\frac{f'(u)^{\alpha-1}f''(u)}{1+f(u)}a^{ij}(x)u_i u_j ~dx 
\\
&\hspace{2cm}+
C\int_{\{u> T\}} \tilde{f}(u)^{(1+\varepsilon)(\alpha-1)-1}f''(u)a^{ij}(x)u_i u_j ~dx
\\
&\leq C\left\{\int_{\{u<T\}}a^{ij}(x)u_i u_j ~dx
+
\int_{\Omega}\tilde{f}(u)f''(u)a^{ij}(x)u_iu_j dx\right\}
\\
&\leq C\left\{T\int_\Omega f(u)~dx
+
\int_{\Omega}\tilde{f}(u)f''(u)a^{ij}(x)u_iu_j dx\right\}. 
\end{aligned}
\end{equation}
In particular, by Lemma \ref{lem1} and the bound \eqref{L1}, we obtain
\[ 
\int_\Omega f'(u)^\alpha~dx\leq C\quad \textrm{where }\alpha =  \frac{3+\varepsilon}{1+\varepsilon}.
\]
Therefore, by the arbitrariness of $\varepsilon>0$, we obtain $f'(u)\in L^p(\Omega)$ for 
all $p<3$. As a consequence, by Corollary \ref{cor2.4} and since $u\in L^1(\Omega)$ 
(see Remark~\ref{remL1}), we obtain the $L^\infty$ estimate established in part $(a)$, 
\textit{i.e.}, if $n<6$ then $\| u\|_{ L^{\infty}(\Omega)}\leq C$.

In the following, we may assume $n\geq 6$. Let us prove now that $f(u)\in L^p(\Omega)$ for 
all $p<n/(n-4)$, and as a consequence, the statement in part $(b)$. 
Now we take $\alpha=1+\frac{1}{1+\varepsilon}$. 
Multiplying \eqref{eq1.10} by $(f'(u)-f'(0))^{\alpha}$  and using \eqref{eq1.40:bis} 
and  Lemma \ref{lem1}, we obtain 
\begin{equation}\label{eq3.60} 
\begin{array}{lll}
\displaystyle \int_{\Omega} f(u)(f'(u)-f'(0))^{\alpha}~dx
&=&
\displaystyle\alpha \int_{\Omega}(f'(u)-f'(0))^{\alpha-1}f''(u) a^{ij}u_i u_j~dx
\\
&\leq& \displaystyle C \int_{\Omega}\tilde{f}(u)f''(u) a^{ij}u_i u_j~dx\leq C.
\end{array}
\end{equation}
Hence, using the convexity of $f$ and that $f'(0)\leq f'(t)/2$ for $t$ large, we obtain
\begin{equation}\label{eq3.30} 
\int_{\Omega} \frac{\tilde{f}(u)^{\alpha+1}}{u^{\alpha}}dx \leq C 
\quad 
\text{for all }\alpha \in (1,2).
\end{equation}

We now repeat the iteration argument of Nedev \cite{Nedev}.
Assume that $u\in L^p(\Omega)$ for all $1\leq p<p_0$. 
Given any positive number $\beta$, set
\[
\Omega_1:=\{x\in \Omega: \frac{\tilde{f}(u)^{\alpha+1}}{u^{\alpha}} >\tilde{f}(u)^{\alpha+1-\beta}\},\,\, 
\Omega_2:=\Omega \setminus \overline{\Omega}_1
=\{x\in \Omega: \tilde{f}(u) \leq u^{\frac{\alpha}{\beta}}\}~.
\]
By \eqref{eq3.30}, we have
\begin{equation} \label{eq3.40}
\int_{\Omega_1}\tilde{f}(u)^{\alpha+1-\beta}~dx \leq C.
\end{equation}
Moreover,
\begin{equation} \label{eq3.50}
\int_{\Omega_2}\tilde{f}(u)^p~dx \leq \int_{\Omega_2}u^{\frac{\alpha}{\beta}p}dx 
\leq C
\quad\text{for all }p<\frac{\beta}{\alpha}p_0.
\end{equation}
Choose $\beta$ such that $\alpha+1-\beta=\frac{\beta}{\alpha} p_0$, 
i.e., $\beta=(\alpha+1)/(1+\frac{p_0}{\alpha})$. 
Then, by \eqref{eq3.40}, \eqref{eq3.50}, and letting $\alpha\uparrow 2$,
we obtain $\tilde{f}(u)\in L^p(\Omega)$ for all $1<p< \frac{3p_0}{2+p_0}$. 
Hence, by elliptic regularity theory (see Proposition~\ref{lem2:1}~(ii)),
\[ 
u\in L^p(\Omega) \quad \text{for all } 1<p<p_1:=\frac{n\frac{3p_0}{2+p_0}}{n-2\frac{3p_0}{2+p_0}}=\frac{3np_0}{2n+(n-6)p_0}.
\]

By \eqref{L1:bis} we can start the iteration process with $p_0=n/(n-2)$.
Set $p_{k+1}:=\frac{3np_k}{2n+(n-6)p_k}$ for $k\geq 1$. Note that 
$p_k\leq n/(n-6)$ for all $k\geq 1$ (by induction and since $p_0=n/(n-2)\leq n/(n-6)$). 
Moreover $p_{k+1}>p_k$ (in fact, this is equivalent to $p_k <n/(n-6)$), and hence, 
$\lim_{k\goto \infty}p_k=n/(n-6)=:p_{\infty}$.

Therefore, we obtain $f(u) \in L^p(\Omega)$ for all $1<p<\frac{3p_{\infty}}{2+p_{\infty}}=\frac{n}{n-4}$. 
The remainder of the statements of Theorem \ref{th1} follow from standard elliptic regularity theory
(see Proposition~\ref{lem2:1}~(ii)).
\end{proof}

\begin{proof}[Proof of Theorem~{\rm \ref{th2}}]
The proof of Theorem \ref{th2} is essentially the same. Using assumption 
\eqref{H:tm2} in \eqref{eq3.10}, the first part of the proof gives $f'(u)\in 
L^\alpha(\Omega)$ with $\alpha={\frac{3-\varepsilon}{1-\varepsilon}}$. Therefore, 
by Corollary~\ref{cor2.4} and since $u\in L^1(\Omega)$ (see Remark~\ref{remL1}), 
$u\in L^{\infty}(\Omega)$ when $\frac{3-\varepsilon}{1-\varepsilon}>\frac{n}{2}$, or equivalently, 
when $n< 6+\frac{4\varepsilon}{1-\varepsilon}$.

Assume $n\geq 6+\frac{4\varepsilon}{1-\varepsilon}$.
To obtain the estimate on $f(u)$, we deduce \eqref{eq3.60} with $\alpha=1+\frac{1}{1-\varepsilon}$ 
using now \eqref{H:tm2} instead of \eqref{eq1.40}. In particular, 
$$
\int_\Omega\frac{\tilde{f}(u)^{\alpha+1}}{u^\alpha}~dx\leq C\quad \textrm{for }\alpha=1+\frac{1}{1-\varepsilon}.
$$
At this point, we repeat the previous iteration argument to obtain the increasing sequence
$$
p_0=\frac{n}{n-2},\quad 
p_{k+1}=\frac{(3-2\varepsilon)np_k}{(2-\varepsilon+(1-\varepsilon)p_k)n-2(3-2\varepsilon)p_k} ,
\quad\textrm{for all }k\geq 0,
$$ 
with limit $p_{\infty}=\frac{(1-\varepsilon)n}{(1-\varepsilon)n-6+4\varepsilon}$. As a consequence, 
$f(u)\in L^p(\Omega)$ for all $p<\frac{\beta}{\alpha}p_\infty$ where $\beta=(\alpha+1)/(1+\frac{p_\infty}{\alpha})$,
i.e., $f(u)\in L^p(\Omega)$ for all $p<\frac{(1-\varepsilon)n}{(1-\varepsilon)n-4+2\varepsilon}$.
The remainder of the statements of Theorem \ref{th2} follow from standard elliptic regularity theory (see 
Proposition~\ref{lem2:1}~(ii)).
\end{proof}

\end{document}